\documentclass[a4paper, 11pt]{amsart}
\sloppy
\input xy   
\xyoption{all}
\swapnumbers
\usepackage{amssymb}
\usepackage{amsthm}
\usepackage{enumerate}
\usepackage[active]{srcltx}
\usepackage{color}
\numberwithin{equation}{section}

\theoremstyle{plain}
  \begingroup
        \newtheorem{theorem}[equation]{Theorem}
        
        \newtheorem{proposition}[equation]{Proposition}
        \newtheorem{corollary}[equation]{Corollary}

	    \newtheorem{definition}[equation]{Definition}

  \endgroup

\theoremstyle{definition}
  \begingroup
        \newtheorem{remark}[equation]{Remark}
        
        \newtheorem{comment}[equation]{Comment}

  \endgroup


\newcommand{\mr}[1]{\buildrel {#1} \over \longrightarrow}


\newcommand{\rimply}{\Rightarrow}

\newcommand{\vs}{\vspace}
\newcommand{\hs}{\hspace}

\newcommand{\mmr}[1]{\buildrel {#1} \over \hookrightarrow}



\newcommand{\cqd}{\hfill$\Box$}

\begin{document}


\title{Some remarks on infinitesimals in MV-algebras}

\author{Eduardo J. Dubuc and Jorge C. Zilber}


\maketitle

\begin{abstract}
Replacing $\{0\}$ by the whole ideal of infinitesimals yields a weaker notion of \emph{archimedean element} that we call \emph{quasiarchimedean}. It is known that semisimple MV-algebras with compact maximal spectrum (in the co-Zarisky topology) are exactly the hyperarchimedean algebras. We characterise all the algebras with compact maximal spectrum as being \emph{quasihyperarchimedean} \mbox{MV-algebras,} which in a sense are non semisimple hyperarchimedean algebras. We develop some basic facts in the theory of MV-algebras along the lines of algebraic geometry, where infinitesimals play the role of nilpotent elements, and prove a MV-algebra version of Hilbert's Nullstellensatz. Finally we consider the relations (some inedited) between several elementary classes of MV-algebras in terms of the ideals that characterise them, and present elementary (first order with denumerable disjunctions) proofs in place of the \mbox{set-theoretical} usually found in the literature.
\end{abstract}

\tableofcontents



\section{Background on MV-algebras of continuos functions}

%


Given an MV-algebra $A$, $X_A \subset [0,1]^{A}$ will denote the set of morphisms of $A$ into the MV-algebra $[0,\; 1]$. $X_A$ becomes a compact Hausdorff space with the topology inherited from the product space. It is immediate to see that a base for the product topology is given by the subsets 
\mbox{$W_a = \{\chi \,| \, \chi(a) > 0\} \subset X_A$,} that is, the \emph{Zariski} topology. On the other hand, this set of morphisms also inherited a topology as a subspace of the prime spectrum $Z_A$ via the map $\chi \mapsto Kernel(\chi) \in M_A \subset Z_A$, where $M_A$ denotes the maximal spectrum (see \cite{DP1}). We will denote this space by 
$X^c_a \cong M_A$, its topology is the \emph{coZariski} topology with a base of open sets given by the complements of the subsets $W_a$, that we denote 
$W^c_a = \{\chi \,| \, \chi(a) = 0\}$,  The $W^c_a$ are also closed in $X^c_a$ (\cite[4.2]{DP1}), which shows that the coZariski topology is finer than the Zariski topology. Given any MV-algebra $A$, 

\vspace{1ex}

{
\refstepcounter{equation}  \noindent (\theequation)  \label{DP_1}
\emph
    {   
\cite[4.16]{DP1}:  
Equality is a continous bijection $X^c_A \mr{=} X_A$. 
    }                   
}


Each element $a \in A $ determines a continuous function 
$X_A \mr{\widehat{a}} [0,\;1]$,
$\widehat{a}(\chi) = \chi (a), \; for \; \chi \in X_A$. This determines a morphism 
$A \mr{ } Cont(X_A)$, with image denoted 
$\widehat{A} \subset Cont(X_A)$. 
Note that $W^c_a = \widehat{a}^{-1}(0)$. 

\vspace{1ex}

Consider the MV-algebra $Cont(X)$ of 
$[0,\,1]$-valued continuous functions on a topological space $X$, and 
let $A \subset Cont(X)$ be a subalgebra. Recall that $A$ is said to be \emph{separating} iff for any two distinct point $x$ and $y$, there is $f \in A$ such that $f(x) = 0$ and $f(y) > 0$. Each $x \in X$ determines a morphism 
$A \mr{\widehat{x}} [0,\;1]$ defined by $\widehat{x}(a) = a(x)$. This determines a continuous function $X \mr{\varepsilon} X_A$. If $X$ is compact Hausdorff and $A$ is separating, we have:

\vspace{1ex}

 {
\refstepcounter{equation}  \noindent (\theequation)  \label{CDM_1}
\emph
    {
     \cite[4.1]{CDM}: \hspace{2ex}  
 The  map $\varepsilon: \, X \mr{\cong} X_A$ is a homeomorphism.                    
    }
}

\vspace{1ex}

Given an ideal $I \subset A$, we denote by $V(I)$ the locus of roots of the functions $f \in I$, 
$
\; V(I) = \{ x \in X \; | \; f(x) = 0 \; \forall \, f \in I\}, \hspace{1ex} (V(I) \subset X \;\; \text{is a closed subset}).
$

\vspace{1ex}

Given a closed subset $S \subset X$, we denote by $J(S)$ the set of all functions null on $S$,  
$
\; J(S) = \{f \in A \; | \; f(x) = 0 \; \forall \, x \in S\},  \hspace{1ex} (J(S) \subset A \;\; \text{ is an ideal}).
$
 
\vspace{1ex}
 
It is immediate to check that the maps $S \mapsto J(S)$ and $I \mapsto V(I)$ are order reversing and that $I \subset J(V(I))$ and $S \subset V(J(S))$. 
 
 If $X$ is compact Hausdorff and $A$ is separating, we have:

\vspace{1ex}

{
\refstepcounter{equation}  \noindent (\theequation)  \label{eca1}
\emph
    {
     \cite[3.4.2]{COM}: $\;\;\; 
 V(J) \neq \emptyset \; for \; each \; proper \; ideal \; J.$                   
    }
}

\vspace{1ex}

{
\refstepcounter{equation}  \noindent (\theequation)  \label{eca2}
\emph
    {
     \cite[3.4.3]{COM}:  $\; \;\;
     S = V(J(S)) \; for \; each \; closed \; subset \; S.$                  
    }
}

\vspace{1ex}

{
\refstepcounter{equation}  \noindent (\theequation)  \label{eca3}
\emph
    {
    It follows that for $f \in A$, $f \in J(S) \;\;  \iff  \;\; f|_S = 0$, thus:
      \begin{center}  
                      $A/(J(S) \cap A ) \cong A|_S$
      \end{center}
    }
}
Recall:

{
\refstepcounter{equation}  \noindent (\theequation)  \label{farchi}
\emph
    {
\cite[4.5]{CDM}: Given any compact space $X$ and any $f \in Cont(X)$, we have  
         \begin{center}                    
     $f$ is archimedean  
      $\iff V(\langle f \rangle) = f^{-1}(0) \subset X$ is open.   
      \end{center}
    }
}

\vspace{1ex}

\noindent where $\langle f \rangle \subset Cont(X)$ is the ideal generated by $f$. Note that under the homeomorphism (\ref{CDM_1}) 
$V(\langle f \rangle) \cong W^c_f \subset X_{Cont(X)}$.  

\vspace{1ex}

Given any families of ideals $\{I_\ell\}_{I_\ell \in L}$ and of closed subsets $\{S_\ell\}_{\ell \in L}$, from the universal property which defines supremum and infimum it immediately follows:
\begin{equation} \label{gn1}
 {\textstyle \bigvee_{\ell \in L} V(I_\ell) 
                     \subset V(\bigwedge_{\ell \in L} I_\ell)}, 
   \hs{5ex} {\textstyle \bigvee_{\ell \in L} J(S_\ell) 
                     \subset J(\bigwedge_{\ell \in L} S_\ell)}. 
\end{equation}
\begin{equation} \label{gn2}
{\textstyle \bigwedge_{\ell \in L} V(I_\ell) 
                     =  V(\bigvee_{\ell \in L} I_\ell)}, 
   \hs{5ex} {\textstyle \bigwedge_{\ell \in L} J(S_\ell)
                     =  J(\bigvee_{\ell \in L} S_\ell)}.
\end{equation} 
(the infima here are the set theoretical intersection, but the suprema not). 

\vspace{1ex}

{\sc Free MV-algebras}
   
For each set $N$, we denote by $F[N]$ the free MV-algebra on \mbox{$N$-generators.} $F[N]$ is the MV-algebra of terms $f$ in variables 
$\{x_i\}_{i \in N}$. 

Note that with the hindsight of category theory free algebras should be considered up to isomorphisms. In this way we associate free algebras to sets, not to cardinals. Any two bijective sets determine isomorphic algebras.

By Chang's completeness theorem $F[N]$ can be considered to be the \mbox{MV-algebra} of $[0,\, 1]$-valued term functions on the compact space 
$[0,\;1]^N$. Term functions are continuous and it is not difficult to prove they are separating.

\vs{1ex}

{
\refstepcounter{equation} \noindent (\theequation)  \label{chang}
\emph
    {
     \cite[3.4.6]{COM}: $\;F[N]$ can be considered to be the separating 
     subalgebra of term functions
     $F[N] \subset Cont(X)$, for the compact space $X = [0,\, 1]^N$.     
    }
}

\vs{1ex}

%
%
%

%
%

Given a MV-algebra with a presentation $A = F[N]/I$, $a_i = [x_i]$, the universal properties of the free algebra and the quotient algebra say (in turn) that the restriction along $N \mmr{i} F[N]$ determines a continuous bijection
 $i^*: X_{F[N]} \mr{\cong} [0,\;1]^N$  that restricts to a bijection $i^*: X_A \mr{\cong} V(I)$. Since the spaces are compact Hausdorff they are homeomorphisms.

\vs{1ex}

{
\refstepcounter{equation} \noindent (\theequation)  \label{chang}
\emph
    {
    If $A = F[N]/I$, then the restriction along $N \mmr{i} F[N]$ determines a homeomorphism  
    $i^*: X_A \mr{\cong} V(I) \subset  [0,\;1]^N$, 
    $i^*(\chi) = (\chi(a_i))_{i \in N}$.   
    }
}

\vs{1ex}

\section{Quasihyperarchimedean algebras} \label{quasihyperarchimedean}

Recall that an element $a$ in an MV-algebra $A$ is said to be \emph{infinitesimal} if for each integer $n \geq 0$, $ na\leq \neg a$, equivalently, iff \mbox{$na \ominus \neg a = na \odot a = 0$}.\footnote{caution: Contrary with common usage, we consider $0$ to be infinitesimal, as in algebraic geometry $0$ is considered to be nilpotent.}
\begin{remark}(\cite[3.6.3]{COM}) \label{arqinf=0}
For any infinitesimal element $a > 0$, the sequence 
$(0 \leq a \leq 2a \leq 3a \leq \;\ldots \;  \leq  na \leq  \ldots \;\;)$ is strictly increasing. \cqd
\end{remark}
Recall that an element $a$ in an MV-algebra $A$ is said to be $archimedean$ if there is an integer $n \geq 0$, such that $ (n + 1) a \ominus na = 0$, equivalently, iff the sequence $(a \leq 2a \leq 3a \leq \;\ldots \;  \leq  na \leq  \ldots \;\;)$ is stationary.

\vspace{1ex}

\emph{Note that it follows that the only archimedean infinitesimal is $0$.}

\vspace{1ex}

For any ideal $I$ it follows by an easy induction:

\begin{remark} \label{nimpliesmk}
Given $x \in A$ and an integer $n \geq 1$,   
     if  \mbox{$(n+1)x \ominus nx \in I$}, 
     then \mbox{$\forall \, m > k \geq n, \; mx \ominus kx \in I$.}
 \cqd
\end{remark}

\begin{definition} \label{quasiarchimedean}
An element $a$ in an MV-algebra $A$ is said to be \emph{quasiarchimedean} if there is an integer $n \geq 0$, such that $ (n + 1) a \ominus na$ is infinitesimal. A MV-algebra is \emph{quasihyperarchimedean} if every element is quasiarchimedean.
\end{definition}
Clearly archimedean elements are quasiarchimedean, and hyperarchimedean algebras are quasihyperarchimedean.

\begin{proposition} \label{archiquasiarchi}
$a \in A$ is quasiarchimedean $\; \iff \;$  $\widehat{a} \in \widehat{A}$ is archimedean.
\end{proposition}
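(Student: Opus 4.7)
The plan is to reduce the equivalence to the single fact that an element $b \in A$ is infinitesimal if and only if $\widehat{b} = 0$, i.e.\ iff $\chi(b) = 0$ for every $\chi \in X_A$. Once this is in place, applying it to $b = (n+1)a \ominus na$ and using that $\widehat{(\cdot)}$ is an MV-morphism gives the statement immediately.

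First I would record the infinitesimal characterization. The forward implication is trivial: if $nb \leq \neg b$ for every $n$, applying any $\chi \in X_A$ yields $n\chi(b) \leq 1 - \chi(b)$ in $[0,1]$ for all $n$, forcing $\chi(b) = 0$. For the converse, I would appeal to the standard subdirect representation (Belluce/Chang), according to which the intersection of all maximal ideals of $A$ coincides with the ideal of infinitesimals; equivalently, the kernel of $A \to \widehat{A} \subset Cont(X_A)$ is exactly the infinitesimal ideal. Thus $\widehat{b} = 0$ forces $b$ to be infinitesimal.

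Next, since the evaluation map $a \mapsto \widehat{a}$ is an MV-algebra homomorphism, we have
\[
\widehat{(n+1)a \ominus na} \;=\; (n+1)\widehat{a} \ominus n\widehat{a}
\]
as elements of $\widehat{A} \subset Cont(X_A)$. By the preceding step, $(n+1)a \ominus na$ is infinitesimal in $A$ if and only if $(n+1)\widehat{a} \ominus n\widehat{a}$ vanishes identically on $X_A$, that is, if and only if $(n+1)\widehat{a} \ominus n\widehat{a} = 0$ in $\widehat{A}$. Quantifying existentially over $n \geq 0$ on both sides, $a$ is quasiarchimedean iff $\widehat{a}$ is archimedean in $\widehat{A}$, which is the claim.

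The only real obstacle is the converse half of the infinitesimal characterization, since everything else is a direct unwinding of definitions; but this is a standard fact about MV-algebras and is used throughout the paper, so the proof reduces to citing it and invoking the homomorphism property of $a \mapsto \widehat{a}$.
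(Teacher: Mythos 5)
Your proposal is correct and follows essentially the same route as the paper: both directions come down to the fact that an element is infinitesimal iff it lies in the intersection of the kernels of all $\chi \in X_A$ (i.e.\ $\sqrt{A} = Rad(A)$), combined with the homomorphism property of $a \mapsto \widehat{a}$ applied to $(n+1)a \ominus na$. The paper's proof phrases the forward direction as ``morphisms preserve quasiarchimedean elements and $\widehat{A}$ has no nonzero infinitesimals'' and the backward direction via $(n+1)a \ominus na \in Rad(A) = \sqrt{A}$, which is exactly your argument.
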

\begin{proof}
One implication is clear since any morphism preserves quasiarchimedean elements, and the only infinitesimal in $\widehat{A}$ is $0$. For the other implication, take $n \geq 0$ such that 
$(n+1)\widehat{a} \ominus n\widehat{a} = 0$. Then for all $\chi \in X_A$,  
$0 = (n+1)\widehat{a}(\chi) \ominus n\widehat{a}(\chi) =
(n+1)\chi(a) \ominus n \chi(a) = \chi((n+1)a \ominus na)$. Thus  
$(n+1)a \ominus na \in Rad(A) = \sqrt{A}$, that is, it is infinitesimal. 
\end{proof}

From (\ref{farchi}) and Proposition \ref{archiquasiarchi} it immediately follows:
\begin{proposition}
$a \in A$ is quasiarchimedean $\iff$ 
$W^c_a \subset X_A$ is open.

\vspace{1ex}

\noindent  (this corrects the asymmetry  in propositions 5.4 and 5.6 of \cite{DP1}). \cqd
\end{proposition}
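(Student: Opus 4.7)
The plan is to chain together the two results the authors themselves cite, since this is advertised as an immediate consequence. First, I would invoke Proposition~\ref{archiquasiarchi}, which equates quasiarchimedicity of $a \in A$ with archimedicity of its image $\widehat{a}$ in $\widehat{A} \subset Cont(X_A)$. Second, I would apply (\ref{farchi}) to the compact Hausdorff space $X = X_A$ and the continuous function $f = \widehat{a}$; this gives that $\widehat{a}$ is archimedean if and only if $\widehat{a}^{-1}(0) \subset X_A$ is open. Finally, I would close the chain by recalling the identification $W^c_a = \widehat{a}^{-1}(0)$ that was recorded immediately after the definition of $\widehat{a}$ in the background section.

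The one point that needs a brief sanity check — the ``main obstacle,'' in a very mild sense — is that archimedicity of $\widehat{a}$ is being read off in $\widehat{A}$ in Proposition~\ref{archiquasiarchi} but in $Cont(X_A)$ in (\ref{farchi}). This is harmless: the condition $(n+1)\widehat{a} \ominus n\widehat{a} = 0$ is a pointwise identity on $X_A$, so it holds in the subalgebra $\widehat{A}$ exactly when it holds in the ambient $Cont(X_A)$. Equivalently, $\widehat{A} \hookrightarrow Cont(X_A)$ is injective and MV-operations are computed pointwise, so stationarity of the sequence $(n\widehat{a})_n$ is unambiguous.

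No further computation is required, and the structure of the argument is simply:
\[
a \text{ quasiarchimedean} \;\stackrel{\ref{archiquasiarchi}}{\iff}\; \widehat{a} \text{ archimedean} \;\stackrel{(\ref{farchi})}{\iff}\; \widehat{a}^{-1}(0) \text{ open in } X_A \;\iff\; W^c_a \text{ open in } X_A.
\]
Thus the proof is essentially a citation: combine Proposition~\ref{archiquasiarchi} with (\ref{farchi}), applied to $X_A$ and $\widehat{a}$, and rewrite $\widehat{a}^{-1}(0)$ as $W^c_a$.
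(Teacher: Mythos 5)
Your proof is correct and is exactly the argument the paper intends: the paper states the proposition as an immediate consequence of (\ref{farchi}) and Proposition \ref{archiquasiarchi}, together with the identification $W^c_a = \widehat{a}^{-1}(0)$ from the background section. Your side remark that archimedicity of $\widehat{a}$ is the same whether read in $\widehat{A}$ or in $Cont(X_A)$ is a valid (if minor) point the paper leaves implicit.
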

We establish now a characterisation of quasihyperarchimedean \mbox{MV-algebras} as those algebras with a compact maximal spectrum. The reader should note that the maximal spectrum $M_A \subset Z_A$ is in this case a compact Hausdorff non closed subspace of the compact prime spectrum.

\begin{proposition}
The following conditions in a MV-algebra are equivalent:

(1) $A$ is quasihyperarchimedean.

(2) For all $a \in A$, $W^c_a \subset X_A$ is open (thus clopen).

(3) The map $X_A \mr{} X^c_A \cong M_A$ is continuous (thus a homeomorphism).

(4) The maximal spectrum  $X^c_A \cong M_A$ is compact.
\end{proposition}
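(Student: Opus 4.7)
The plan is to run the cycle (1) $\iff$ (2), (2) $\iff$ (3), (3) $\iff$ (4), each reducing to a one-line observation once the correct previously-stated fact is brought to bear.

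For (1) $\iff$ (2), I would simply apply the preceding proposition element by element: $A$ quasihyperarchimedean means every $a \in A$ is quasiarchimedean, which by that proposition is equivalent to $W^c_a$ being open in $X_A$ for every $a$. The word \emph{clopen} in (2) is then automatic, since $W^c_a = \widehat{a}^{-1}(0)$ is always closed in $X_A$ (Zariski).

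For (2) $\iff$ (3), the essential remark is that, by (\ref{DP_1}), the identity on underlying sets gives a continuous bijection $X^c_A \mr{=} X_A$; the map appearing in (3) is its set-theoretic inverse $X_A \to X^c_A$. Since the family $\{W^c_a : a \in A\}$ is, by definition, a base for the coZariski topology on $X^c_A$, continuity of $X_A \to X^c_A$ amounts exactly to the openness in $X_A$ of every $W^c_a$, which is (2).

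For (3) $\iff$ (4) I would combine (\ref{DP_1}) with the standard fact that a continuous bijection from a compact space to a Hausdorff space is a homeomorphism. If (3) holds, then $X_A \to X^c_A$ is a continuous bijection with continuous inverse coming from (\ref{DP_1}), hence a homeomorphism; since $X_A$ is compact Hausdorff as a closed subspace of $[0,1]^A$, so is $X^c_A$, giving (4). Conversely, if $X^c_A$ is compact, the continuous bijection $X^c_A \to X_A$ of (\ref{DP_1}) runs from a compact space to a Hausdorff one and is therefore automatically a homeomorphism; its inverse $X_A \to X^c_A$ is then continuous, which is (3).

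No step is really a serious obstacle: the whole content is bookkeeping around (\ref{DP_1}), the preceding proposition characterising quasiarchimedean elements by openness of $W^c_a$, and the compact-to-Hausdorff homeomorphism principle. The only mild subtlety to keep straight is that (\ref{DP_1}) provides continuity of $X^c_A \to X_A$ and \emph{not} the other way, so (3) is really the assertion that the inverse is also continuous.
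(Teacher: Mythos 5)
Your proof is correct and follows essentially the same route as the paper: (1) $\iff$ (2) via the preceding proposition on quasiarchimedean elements, (2) $\iff$ (3) because the $W^c_a$ form a base of $X^c_A$, and (4) $\implies$ (3) by applying the compact-to-Hausdorff homeomorphism principle to the continuous bijection of (\ref{DP_1}). Your explicit remark about which direction of continuity (\ref{DP_1}) supplies is exactly the point the paper's terser proof leaves implicit.
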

\begin{proof} Clearly $(1) \iff (2)$, and $(3) \implies (4)$. $(4) \implies (3)$ because then (see \ref{DP_1}) $X^c_A \mr{} X_A$ is a continuous bijection between compact Hausdorff spaces. Finally, $(2) \iff (3)$ because the sets $W^c_a$ are an open \mbox{base of  $X^c_A$.}
\end{proof}

\section{The MV-Nullstellensatz.} \label{infandradical}

In this section we develop some basic lines of algebraic geometry in the context of MV-algebras (reference is \cite{HA}). As nilpotent elements are considered "infinitesimal" in algebraic geometry, here its role is played by the MV-algebra concept of, properly called, infinitesimal elements. 

\vspace{1ex}

We start by recalling a first-order (with denumerable disjunctions) characterisation of maximal ideals, which is a key result in the theory of \mbox{MV-algebras} (\cite[1.2.2]{COM}). For any MV-algebra $A$ and ideal $I \subset A$, 
\begin{equation} \label{basic}
I \; is \; maximal \;\; \iff \;\; \forall x \in A \; (x \notin I \; \iff  
\exists \, n \geq 1\; | \; \neg nx \; \in I). 
\end{equation}

\vspace{1ex}

The intersection of all maximal ideals of a MV-algebra $A$ is an ideal called the \emph{radical} of $A$, and denoted $Rad(A)$. In the light of this, we define:
\begin{definition} \label{radicalideals}
Given an ideal $I \subset A$, the intersection of all maximal ideals $M \supset I$ containing $I$ is an ideal that we call the \emph{radical} of $I$, denoted $Rad(I)$. $I$ is called a \emph{radical} ideal if $I = Rad(I)$.
\end{definition}
\begin{remark} \label{radicalprime}
Recall that if $I$ is a prime ideal, then it is contained in a unique maximal ideal \cite[1.2.12]{COM}. It follows that  $Rad(I)$ is a maximal ideal. 
\end{remark}
\begin{proposition} \label{-Rad=Rad-}
Let $A \mr{\varphi} B$ be a surjective morphism of MV-algebras, and 
$I \subset B$ any ideal of $B$. Then:
$$
\varphi^{-1}Rad(I) = Rad(\varphi^{-1}I).
$$
\end{proposition}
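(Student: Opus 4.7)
My plan is to prove both inclusions by leveraging the standard correspondence between ideals of $B$ and ideals of $A$ containing $\ker(\varphi)$ induced by the surjection $\varphi$. The key observation will be that this correspondence restricts to a bijection between the maximal ideals of $B$ containing $I$ and the maximal ideals of $A$ containing $\varphi^{-1}(I)$; once that is in hand, the conclusion will follow formally from the fact that set-theoretic preimages commute with arbitrary intersections.

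First I would transfer maximality across $\varphi$. Using the first-order characterisation (\ref{basic}) together with the surjectivity of $\varphi$, one checks that $\varphi^{-1}(N)$ is maximal in $A$ whenever $N$ is maximal in $B$: for $a \in A$, setting $b = \varphi(a)$, the equivalence ``$b \notin N \iff \exists\, n \geq 1,\; \neg n b \in N$'' pulls back verbatim since $\varphi(\neg n a) = \neg n b$. Conversely, for any maximal $M \supset \ker(\varphi)$ in $A$ one has $\varphi^{-1}(\varphi(M)) = M$, so $M = \varphi^{-1}(N)$ for the maximal ideal $N = \varphi(M)$ of $B$.

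Next I would verify that this bijection restricts to the relevant containments. If $N \supset I$, then trivially $\varphi^{-1}(N) \supset \varphi^{-1}(I)$. Conversely, if $M \supset \varphi^{-1}(I)$ is maximal in $A$, then $M \supset \ker(\varphi)$, and surjectivity gives $\varphi(M) \supset \varphi(\varphi^{-1}(I)) = I$.

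To conclude, using that preimages commute with arbitrary intersections, I would write
\[
\varphi^{-1}(Rad(I)) \;=\; \varphi^{-1}\Bigl(\bigcap_{N} N\Bigr) \;=\; \bigcap_{N} \varphi^{-1}(N) \;=\; \bigcap_{M} M \;=\; Rad(\varphi^{-1}(I)),
\]
where $N$ ranges over maximal ideals of $B$ containing $I$ and $M$ over maximal ideals of $A$ containing $\varphi^{-1}(I)$. The only step deserving real attention is the maximality transfer in the second paragraph, where surjectivity of $\varphi$ and (\ref{basic}) are both essential; everything else is purely formal.
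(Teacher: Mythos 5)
Your proposal is correct and follows essentially the same route as the paper: the paper's (terser) proof likewise rests on the observation that under the surjection $\varphi$ the correspondence $N \mapsto \varphi^{-1}N$ preserves and reflects both containment of $I$ and maximality (the latter via (\ref{basic})), after which the identity is the formal fact that preimages commute with intersections. You have merely written out the details the paper leaves to the reader.
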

\begin{proof}
It follows once we observe that for any pair of ideals $M,\, I$ in $B$, $M \supset I$ iff $\varphi^{-1}M \supset \varphi^{-1}I$, and $M$ is maximal iff $\varphi^{-1}M$ is maximal (the second equivalence follows easily from (\ref{basic}) above).
\end{proof}
\begin{proposition} \label{Rad=JV}
Let $X$ be a compact space, $A \subset Cont(X)$ a separating subalgebra, and $I \subset A$ any ideal. Then, $Rad(I) = J(V(I))$. Thus, $I$ is a radical ideal iff $I = J(V(I))$.
\end{proposition}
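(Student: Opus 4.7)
The plan is to reduce the statement to a clean set-theoretic identity by identifying maximal ideals of $A$ with points of $X$, which is possible thanks to the hypothesis that $A$ is separating on a compact Hausdorff space.

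First I would set up the point/maximal-ideal correspondence. By (\ref{CDM_1}), the map $\varepsilon: X \mr{\cong} X_A$ sending $x$ to $\widehat{x}$ is a homeomorphism; and via the kernel map $\chi \mapsto \ker(\chi)$ we have $X_A \cong M_A$ (every maximal ideal of an MV-algebra is the kernel of a morphism into $[0,1]$, since the quotient by a maximal ideal embeds into $[0,1]$). Composing these, every maximal ideal of $A$ has the form
\[
M_x \;=\; \{f \in A \,|\, f(x) = 0\}, \qquad x \in X,
\]
and every $x \in X$ determines such a maximal ideal.

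Next I would observe the key equivalence: for $x \in X$,
\[
M_x \supset I \;\iff\; f(x) = 0 \text{ for all } f \in I \;\iff\; x \in V(I).
\]

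Finally, the claim drops out by unwinding the definitions:
\[
Rad(I) \;=\; \bigcap_{M \supset I \text{ maximal}} M \;=\; \bigcap_{x \in V(I)} M_x \;=\; \{f \in A \,|\, f(x) = 0 \;\forall x \in V(I)\} \;=\; J(V(I)).
\]
The ``radical iff $I = J(V(I))$'' clause is then immediate from Definition \ref{radicalideals}.

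The only delicate point is the first step: making precise that every maximal ideal of $A$ is a point-evaluation kernel $M_x$ for some $x \in X$. This requires both the standard MV-algebra fact $X_A \cong M_A$ and the hypothesis (compact $X$, separating $A$) that makes $\varepsilon$ a bijection. Once this identification is in place, the rest is a one-line calculation.
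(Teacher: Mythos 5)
Your proof is correct and takes essentially the same route as the paper's: the paper's one-line argument also rests on identifying the maximal ideals of $A$ with the point-evaluation kernels $J(\{x\})$ and on the equivalence $I \subset J(\{x\}) \iff x \in V(I)$, after which $Rad(I) = \bigcap_{x \in V(I)} J(\{x\}) = J(V(I))$. You have merely made explicit the ingredients (the homeomorphism $\varepsilon: X \cong X_A$ and the standard bijection $X_A \cong M_A$) that the paper leaves implicit.
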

\begin{proof} 
Once we observe that for any point $x \in X$, $x \in V(I)$ iff $I \subset J(\{x\})$, the proof follows immediately from (\ref{eca3}) above.
\end{proof}
From this proposition and (\ref{eca2}) above it follows: 
\begin{proposition}
Given a compact space $X$ and a separating subalgebra $A \subset Cont(X)$, the correspondence given by $J$ and $V$ establishes a bijection between the closed subsets of $X$ and the radical ideals of $A$.  \cqd
\end{proposition}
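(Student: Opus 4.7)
The plan is to assemble the bijection directly from the two previously established facts: equation (\ref{eca2}), which gives $V(J(S)) = S$ for every closed $S \subset X$, and Proposition \ref{Rad=JV}, which gives $J(V(I)) = Rad(I)$ for every ideal $I \subset A$. Since both $J$ and $V$ are order-reversing, everything is set up for a formal inverse-pair argument; the only thing to verify is that the maps land in the right subclasses.

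First I would check that $V$ sends any ideal to a closed subset of $X$; this was already noted when $V$ was defined. Second, and more importantly, I would verify that $J(S)$ is always a radical ideal: indeed, by (\ref{eca2}) we have $S = V(J(S))$, so applying $J$ to both sides gives $J(S) = J(V(J(S)))$, and the right-hand side equals $Rad(J(S))$ by Proposition \ref{Rad=JV}. Hence $J(S) = Rad(J(S))$, so $J(S)$ is radical.

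With these domain/codomain checks done, the bijection statement is immediate. Starting from a closed subset $S$, the composite $V \circ J$ returns $S$ itself by (\ref{eca2}). Starting from a radical ideal $I$, the composite $J \circ V$ returns $Rad(I) = I$ by Proposition \ref{Rad=JV} together with the definition of a radical ideal. Hence $J$ and $V$ are mutually inverse bijections between closed subsets of $X$ and radical ideals of $A$.

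I do not foresee any real obstacle here: all the nontrivial content (separation, compactness, the first-order characterisation of maximal ideals) has already been absorbed into the two ingredients being invoked, so the argument is just a bookkeeping check that the composites identify and that the images of $J$ and $V$ lie in the claimed subclasses.
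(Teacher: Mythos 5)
Your proposal is correct and follows exactly the route the paper intends: the proposition is stated as an immediate consequence of $S = V(J(S))$ (\ref{eca2}) and $J(V(I)) = Rad(I)$ (Proposition \ref{Rad=JV}), and your two composite computations plus the check that $J(S)$ is radical are precisely the bookkeeping the paper leaves to the reader.
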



\vspace{1ex}

We call the set of infinitesimals (see section \ref{quasihyperarchimedean}) the \emph{infradical} of $A$, and denote it by  
$\sqrt{A}$. It is well known that $\sqrt{A} = Rad(A)$ \cite[3.6.4]{COM}, but we will not need this here, neither that the set $\sqrt{A}$ is an ideal. All this will be a particular case of our more general Theorem \ref{nulle}. Note that $\sqrt{[0,\,1]} = \{0\}$.

\vspace{1ex}

The following definition was communicated to us by R. Cignoli \cite{CI}, compare with \cite[page 48]{HA}. 
\begin{definition}  \label{I-infinitesimals}
Let $I \subset A$ be an ideal of a MV-algebra $A$. An element $a$ in $A$ is said to be $I$-infinitesimal iff $\, na \ominus \neg a \in I$ for each integer 
$ n \geq 0$. Clearly an element $a$ is $I$-infinitesimal iff $\rho(a)$  is infinitesimal in the quotient algebra $A \mr{\rho} A/I$.
\end{definition} 



We call this set the \emph{infradical} of 
$I$, and we denote it by $\sqrt{I}$. Since \mbox{$na \ominus \neg a = na \odot a \leq a$,} it follows $I \subset \sqrt{I}$.
It is immediate to check the following two propositions.
\begin{proposition} \label{radp1}
Let $\{I_\ell \}_{\ell \in L}$ be any family of ideals. Then
$$ 
{\textstyle \sqrt{\,\bigcap_{\ell \in L} I_\ell \,} \;=\; \bigcap_{\ell \in L} \sqrt{I_\ell}.}
$$

\vspace{-4ex} 

\cqd \end{proposition}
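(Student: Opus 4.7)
The statement reduces to a straightforward commutation of two universal quantifiers once the definition of $\sqrt{I}$ is unfolded, so I expect no real obstacle; the plan is essentially to spell out both sides and observe that they produce the same first-order condition on $a$.

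First, I would fix $a \in A$ and apply Definition \ref{I-infinitesimals} to both sides. The left-hand side says $a \in \sqrt{\bigcap_{\ell \in L} I_\ell}$ iff for every integer $n \geq 0$ we have $na \ominus \neg a \in \bigcap_{\ell \in L} I_\ell$, which by definition of set-theoretical intersection is equivalent to
$$ \forall n \geq 0,\; \forall \ell \in L,\quad na \ominus \neg a \in I_\ell. $$
The right-hand side says $a \in \bigcap_{\ell \in L} \sqrt{I_\ell}$ iff for every $\ell \in L$ we have $a \in \sqrt{I_\ell}$, which unfolds to
$$ \forall \ell \in L,\; \forall n \geq 0,\quad na \ominus \neg a \in I_\ell. $$

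Since these two predicates differ only in the order of two universal quantifiers, they are logically equivalent, and so the two sets coincide. This finishes the proof; the argument does not use that $\sqrt{I}$ is an ideal, nor any MV-algebraic content beyond the bare definition.
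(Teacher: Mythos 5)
Your proof is correct and is precisely the ``immediate check'' the paper intends: unfolding Definition \ref{I-infinitesimals} on both sides reduces the equality to swapping two universal quantifiers. The paper offers no written proof, so there is nothing to compare beyond noting that your argument is the expected one.
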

\begin{proposition} \label{-Irad=Irad-}
Let $A \mr{\varphi} B$ be any morphism of MV-algebras, and 
$I \subset B$ any ideal of $B$. Then:
$$
\varphi^{-1}\sqrt{I} = \sqrt{\varphi^{-1}I}.
$$

\vspace{-4ex} 

\cqd \end{proposition}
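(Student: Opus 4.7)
The plan is to verify the equality by a direct chain of equivalences that unwinds Definition \ref{I-infinitesimals} on both sides. For $a \in A$, I would write:
\begin{align*}
a \in \varphi^{-1}\sqrt{I} \;&\iff\; \varphi(a) \in \sqrt{I} \\
&\iff\; n\varphi(a) \ominus \neg\varphi(a) \in I \text{ for all } n \geq 0 \\
&\iff\; \varphi(na \ominus \neg a) \in I \text{ for all } n \geq 0 \\
&\iff\; na \ominus \neg a \in \varphi^{-1}I \text{ for all } n \geq 0 \\
&\iff\; a \in \sqrt{\varphi^{-1}I}.
\end{align*}
The first and last equivalences are the definitions of preimage and of infradical; the middle one is the one substantive step, namely that $\varphi$, being a morphism of MV-algebras, commutes with $n(\cdot)$, $\neg$ and $\ominus$.

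An alternative packaging, perhaps more in the spirit of Definition \ref{I-infinitesimals}, is to use the final observation there: $a$ is $J$-infinitesimal (for $J \subset A$) iff its class in $A/J$ is infinitesimal. Factoring $\varphi$ through $\bar{\varphi}: A/\varphi^{-1}I \hookrightarrow B/I$, the claim becomes that $\bar{\varphi}$ reflects infinitesimals, which is immediate from injectivity together with the fact that $n\bar{\varphi}(\bar a) \ominus \neg\bar{\varphi}(\bar a) = \bar{\varphi}(n\bar a \ominus \neg \bar a)$. Either way, no surjectivity hypothesis on $\varphi$ is needed, in contrast with Proposition \ref{-Rad=Rad-}, because the condition defining $\sqrt{I}$ is universally quantified in a first-order way over $A$ alone.

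There is essentially no obstacle: the result is purely formal, a consequence of the first-order definition of $I$-infinitesimal and of $\varphi$ being a homomorphism. The only thing to double-check is that the definition involves only term operations (which it does, since $n a$ denotes an iterated MV-sum), so that the middle equivalence above is valid.
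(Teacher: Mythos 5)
Your direct chain of equivalences is correct and is exactly the "immediate to check" verification the paper has in mind (it offers no written proof beyond that remark), resting as you say on $\varphi$ commuting with the term operations $n(\cdot)$, $\neg$, $\ominus$. Your observation that no surjectivity is needed, unlike in Proposition \ref{-Rad=Rad-}, is accurate and consistent with the paper's statement.
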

%
\begin{proposition} \label{prenull}
 Let $X$ be any topological space, and \mbox{$A \subset Cont(X)$} any subalgebra (not necessarily separating). Then: 

\vs{1ex}

1) $\sqrt{J} \subset J(V(J))$. \hspace{2ex} 2) If $X$ is compact, $J(V(J)) \subset \sqrt{J}$

\vs{1ex}

Thus, for compact $X$, $\sqrt{J} = J(V(J))$.
\end{proposition}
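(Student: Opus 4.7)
The plan is to establish part 1 by reducing to a single-point evaluation argument, and part 2 by a standard finite-subcover argument exploiting continuity of $a$ and compactness of $X$.

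For part 1, I would fix $x \in V(J)$ and consider the evaluation morphism $ev_x \colon A \to [0,1]$, which sends every $f \in J$ to $0$, so $J \subset ev_x^{-1}(\{0\})$. Monotonicity of $\sqrt{\cdot}$ in the ideal (immediate from Definition \ref{I-infinitesimals}) gives $\sqrt{J} \subset \sqrt{ev_x^{-1}(\{0\})}$, and Proposition \ref{-Irad=Irad-} rewrites the right-hand side as $ev_x^{-1}(\sqrt{\{0\}})$. In $[0,1]$ the only infinitesimal is $0$ (a direct consequence of Remark \ref{arqinf=0}, since any $a > 0$ in $[0,1]$ has $na = 1$ for $n$ large, forcing $na \odot a = a \neq 0$), so $\sqrt{\{0\}} = \{0\}$. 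Hence $a(x) = 0$ for every $a \in \sqrt{J}$ and every $x \in V(J)$, i.e.\ $\sqrt{J} \subset J(V(J))$.

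For part 2, I would fix $a \in J(V(J))$ and $n \geq 1$ (the case $n = 0$ being trivial) and aim to dominate $na \odot a$ by an element of $J$ in the MV-order. The key pointwise computation in $[0,1]$, using $(na)(t) = \min(na(t),1)$ and $u \odot v = \max(0, u+v-1)$, shows $(na \odot a)(t) > 0$ implies $(n+1)a(t) > 1$, so the support of $na \odot a$ lies in the closed set $K_n = \{t \in X : a(t) \geq 1/(n+1)\}$, which is compact by compactness of $X$ and continuity of $a$. Because $a$ vanishes on $V(J)$, $K_n$ is disjoint from $V(J)$, so for each $t \in K_n$ some $f_t \in J$ satisfies $f_t(t) > 0$. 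Extracting a finite subcover of $K_n$ by the open sets $\{f_t > 0\}$ and letting $f \in J$ be the MV-supremum of the chosen $f_t$'s yields $f > 0$ on the compact set $K_n$, hence $f \geq \epsilon > 0$ there for some $\epsilon$. For $N \geq 1/\epsilon$ we get $Nf \equiv 1$ on $K_n$, whence $na \odot a \leq Nf$ pointwise (the inequality is immediate on $K_n$, and off $K_n$ the left-hand side vanishes). Since $Nf \in J$ and ideals are downward closed in the MV-order, $na \odot a \in J$, as required.

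The only non-routine step is the pointwise MV-arithmetic check that the support of $na \odot a$ lies in $K_n$; once this is in hand, everything else is a textbook compactness argument together with the basic fact that $[0,1]$ admits no nonzero infinitesimal. I do not expect any serious obstacle beyond keeping the case analysis of the MV-operations straight.
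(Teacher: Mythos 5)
Your proof is correct. Part 1 is essentially the paper's own argument (evaluate at a point of $V(J)$ and use that $[0,1]$ has no nonzero infinitesimals; whether one phrases this via $ev_x^{-1}$ and Proposition \ref{-Irad=Irad-} or by direct computation is immaterial). Part 2, however, takes a genuinely different route. The paper reduces to the case of a prime ideal, invoking the representation of an arbitrary ideal as an intersection of primes together with (\ref{gn1}), (\ref{gn2}) and Proposition \ref{radp1}, and then runs a syntactic argument using primality plus the nonemptiness of $V(I)$ from (\ref{eca1}). You instead argue directly: the pointwise computation showing that $\{(na \odot a) > 0\} \subset K_n = \{a \geq 1/(n+1)\}$ is correct (if $\min(na(t),1)+a(t)>1$ then either $na(t)\geq 1$ or $(n+1)a(t)>1$), $K_n$ is compact and disjoint from $V(J)$, and the finite-subcover construction produces $Nf \in J$ identically $1$ on $K_n$ dominating $na \odot a$, which lies in $J$ by downward closure. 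This is the classical ``partition of unity''-style argument for rings of continuous functions. It buys two things: it avoids the prime decomposition theorem (a Zorn's-lemma ingredient) entirely, and it sidesteps the citation of (\ref{eca1}), which as stated in the paper carries a separating hypothesis that the proposition explicitly drops (the paper's proof implicitly relies on the fact that (\ref{eca1}) holds without separation, by the same compactness argument you use). The paper's route, in exchange, stays closer to the syntactic, ideal-theoretic style of Section 4 and reuses machinery already set up. Both are valid; yours is arguably the more self-contained for this particular statement.
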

\begin{proof}
1) Let $f$ be $J$-infinitesimal and 
$x \in V(J)$. Then for each integer 
$ n \geq 0$, 
$\, nf(x) \ominus \neg f(x) = (nf \ominus \neg f)(x) = 0$. Since $[0,\,1]$ has no infinitesimals other than $0$, we have $f(x) = 0$.

\hs{4ex} 2) Since any ideal is an intersection of prime ideals \cite[1.2.14]{CDM}, it follows, from (\ref{gn1}), (\ref{gn2}) and Proposition \ref{radp1}, that we can assume $I$ to be prime. Suppose that $f$ is not a $I$-infinitesimal, and let    $n \geq 0$ be such that $nf \ominus \neg f \notin I$. From the equation $(x \ominus y) \wedge (y \ominus x) = 0$ it follows that $\neg f \ominus nf \in I$. That is, 
$\neg(n+1)f = \neg(f \oplus nf) \in I$. By (\ref{eca1}) we can take $x \in V(I)$. Then   $(\neg(n+1)f)(x) = 0$, thus $(n+1)f(x) = 1$ which implies 
$f(x) > 0$. Thus $f \notin J(V(I))$.
\end{proof}
Taking into account (\ref{chang}) above, a particular case of Proposition \ref{prenull} yields (compare with \cite[theorem 5.1]{HA}):
\begin{theorem}[Nullstellensatz] \label{null}
For any ideal $I \subset F[N]$, the ideal of term functions vanishing on the common zero locus of $I$, $V(I) \subset [0,\;1]^N$, is the infradical of $I$, that is $J(V(I)) = \sqrt{I}$. That is, if $f|_{V(I)} = 0$, then 
$\, nf \ominus \neg f \in I$ for each integer $ n \geq 0$.
\end{theorem}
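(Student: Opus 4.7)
The plan is to obtain Theorem \ref{null} as an immediate instance of Proposition \ref{prenull} applied to the setting of free MV-algebras. By Chang's completeness theorem (\ref{chang}), the compact Hausdorff space $X = [0,\,1]^N$ and the identification of $F[N]$ with the separating subalgebra of term functions $F[N] \subset Cont([0,\,1]^N)$ put us exactly in the hypotheses of the ``thus'' conclusion of Proposition \ref{prenull}, namely a compact $X$ together with a subalgebra $A \subset Cont(X)$.

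Under these identifications, for any ideal $I \subset F[N]$ the set $V(I) \subset [0,\,1]^N$ is the common zero locus of the term functions in $I$ and $J(V(I))$ is the ideal of term functions vanishing there; thus the content $\sqrt{I} = J(V(I))$ is literally the conclusion of Proposition \ref{prenull}. The final equivalent phrasing in the statement is just a translation of both sides: $f \in J(V(I))$ is the hypothesis $f|_{V(I)} = 0$, while by Definition \ref{I-infinitesimals}, $f \in \sqrt{I}$ unfolds to $nf \ominus \neg f \in I$ for every integer $n \geq 0$.

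The real work for this theorem therefore lies upstream in Proposition \ref{prenull}. Its nontrivial direction (2) reduces to the case of a prime ideal, using that ideals are intersections of primes together with Proposition \ref{radp1} and the lattice identities (\ref{gn1})--(\ref{gn2}), and then invokes the existence result (\ref{eca1}) to produce a point of $V(I)$ at which a putative counterexample fails to vanish. For Theorem \ref{null} itself the only ``obstacle'' is to recognise, via Chang's theorem, that $F[N]$ satisfies the hypotheses of Proposition \ref{prenull}; once that identification is made, there is nothing further to verify.
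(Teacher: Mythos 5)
Your proposal is correct and coincides with the paper's own treatment: the paper derives Theorem \ref{null} with no further argument as the particular case of Proposition \ref{prenull} obtained by taking $X=[0,\,1]^N$ and $A=F[N]\subset Cont(X)$ via Chang's theorem, exactly as you do. Your unfolding of $J(V(I))$ and $\sqrt{I}$ to match the stated phrasing is the only (routine) extra detail, and it is accurate.
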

Note that in other words this theorem means:

\vspace{1ex}

\emph{Given any  MV-algebra $A$ with a presentation $A = F[N]/I$, then $A$ is isomorphic to the algebra $F[N]|_{V(I)}$ of term-functions restricted to the zero-set $V(I) \subset [0,\;1]^N$, if and only if, $\sqrt{A} = \{0\}$, i.e, $A$ has no infinitesimals other than $0$.}
 
Using now that $F[N]$ is a separating subalgebra, (\ref{chang}) above, we have the following corollary of theorem \ref{null} (\cite[Th. 0.1]{CI}).
\begin{theorem} \label{nulle}
For any MV-algebra $A$ and ideal $I \subset A$, 
$Rad(I) = \sqrt{I}$, in particular, $\sqrt{A} = Rad(A)$.
\end{theorem}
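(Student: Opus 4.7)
The plan is to reduce the statement for an arbitrary MV-algebra $A$ to the already-proved free case (Theorem \ref{null}) via a presentation $A = F[N]/K$, using the two naturality propositions \ref{-Rad=Rad-} and \ref{-Irad=Irad-} to transfer the identity back down through the quotient map.

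First I would pick any presentation $A = F[N]/K$ (for example the canonical one with $N = A$), writing $\rho: F[N] \twoheadrightarrow A$ for the quotient morphism. Let $I \subset A$ be the given ideal, and consider the ideal $\rho^{-1}I \subset F[N]$. Since $F[N] \subset Cont([0,1]^N)$ is a separating subalgebra on a compact Hausdorff space by (\ref{chang}), Proposition \ref{Rad=JV} applies and gives
\[
Rad(\rho^{-1}I) \;=\; J(V(\rho^{-1}I)).
\]
On the other hand, the Nullstellensatz (Theorem \ref{null}) applied directly to the ideal $\rho^{-1}I$ of $F[N]$ yields
\[
\sqrt{\rho^{-1}I} \;=\; J(V(\rho^{-1}I)).
\]
Combining these, the theorem holds for $\rho^{-1}I$ in $F[N]$:
\[
Rad(\rho^{-1}I) \;=\; \sqrt{\rho^{-1}I}.
\]

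Now I transfer this equality to $A$. Since $\rho$ is surjective, Proposition \ref{-Rad=Rad-} gives $\rho^{-1}Rad(I) = Rad(\rho^{-1}I)$; and for any morphism Proposition \ref{-Irad=Irad-} gives $\rho^{-1}\sqrt{I} = \sqrt{\rho^{-1}I}$. Chaining these with the free-case identity we obtain
\[
\rho^{-1}Rad(I) \;=\; \rho^{-1}\sqrt{I}.
\]
Because $\rho$ is surjective, $\rho^{-1}$ is injective on ideals (ideals in $A$ correspond to ideals in $F[N]$ containing $K$), hence $Rad(I) = \sqrt{I}$. The special case $I = \{0\}$ gives $Rad(A) = \sqrt{A}$.

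The only real obstacle is making sure the two transfer propositions are applied with the correct hypotheses: surjectivity is genuinely needed for \ref{-Rad=Rad-} (to ensure maximal ideals pull back to maximal ideals), whereas \ref{-Irad=Irad-} works for any morphism. Everything else is assembly, and the argument is exactly the algebraic-geometry pattern of deducing the abstract Nullstellensatz from the polynomial one by taking a presentation.
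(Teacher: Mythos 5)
Your proof is correct and follows essentially the same route as the paper: the same chain $\rho^{-1}Rad(I) = Rad(\rho^{-1}I) = J(V(\rho^{-1}I)) = \sqrt{\rho^{-1}I} = \rho^{-1}\sqrt{I}$ via Propositions \ref{-Rad=Rad-}, \ref{Rad=JV}, Theorem \ref{null} and Proposition \ref{-Irad=Irad-}. You merely make explicit the final step (injectivity of $\rho^{-1}$ on ideals of $A$) that the paper leaves implicit in the phrase ``it suffices to prove''.
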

\begin{proof}
 Take $N$ such that $F[N] \mr{\rho} A$ is a quotient. It suffices to prove $\rho^{-1}Rad(I) = \rho^{-1}\sqrt{I}$. We have:
$$ 
\rho^{-1}Rad(I) = Rad(\rho^{-1}I) = J(V(\rho^{-1}I)) = \sqrt{\rho^{-1}I} = \rho^{-1}\sqrt{I}.
$$
These equalities follow (in order) by Proposition 
\ref{-Rad=Rad-}, Proposition \ref{Rad=JV}, \mbox{Theorem \ref{null},} and Proposition \ref{-Irad=Irad-}.
\end{proof}
\begin{corollary}
For any MV-algebra $A$ and ideal $I \subset A$, the set of all $I$-infinitesimals is an ideal. 
\end{corollary}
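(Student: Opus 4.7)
The plan is to read the corollary as an immediate consequence of Theorem \ref{nulle}, which is the substantive work already done. By the definition just after Definition \ref{I-infinitesimals}, the set of all $I$-infinitesimals is precisely $\sqrt{I}$. So the claim amounts to saying $\sqrt{I}$ is an ideal, and Theorem \ref{nulle} identifies $\sqrt{I}$ with $Rad(I)$.

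Next I would invoke the definition of $Rad(I)$ (Definition \ref{radicalideals}): it is the intersection of all maximal ideals $M \supset I$. An intersection of ideals is always an ideal (closure under $\oplus$ and under passing to smaller elements is preserved by arbitrary intersections). This is structurally trivial and requires no computation, since each maximal $M$ is already an ideal.

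Putting the two steps together: the set of $I$-infinitesimals equals $\sqrt{I} = Rad(I) = \bigcap_{M \supset I,\; M \text{ maximal}} M$, which is an ideal as an intersection of ideals. There is no real obstacle here; the entire content of the corollary is the nontrivial equality $\sqrt{I} = Rad(I)$ supplied by Theorem \ref{nulle}, and the virtue of the statement is precisely that this identity is not at all obvious directly from Definition \ref{I-infinitesimals}, where closure of $\{a : na \ominus \neg a \in I \text{ for all } n\}$ under $\oplus$ is far from transparent.
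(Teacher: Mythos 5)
Your proof is correct and is exactly the argument the paper intends: the corollary follows immediately from Theorem \ref{nulle}, since $\sqrt{I}=Rad(I)$ is an intersection of maximal ideals and hence an ideal. The paper even flags this reading in advance (in Section \ref{infandradical}, where it notes that the fact that $\sqrt{A}$ is an ideal will be a particular case of Theorem \ref{nulle}).
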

\begin{corollary}
An ideal $I \subset A$ of a MV-algebra $A$ is a \emph{radical} ideal (Definition \ref{radicalideals}) if and only if $I = \sqrt{I}$.
\end{corollary}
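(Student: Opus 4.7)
The plan is to observe that this corollary is an immediate consequence of Theorem \ref{nulle}, with no additional work required. By Definition \ref{radicalideals}, an ideal $I \subset A$ is radical precisely when $I = Rad(I)$, where $Rad(I)$ denotes the intersection of all maximal ideals containing $I$. Theorem \ref{nulle} identifies this ideal with the infradical: $Rad(I) = \sqrt{I}$ for every ideal $I$ in every MV-algebra $A$.

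Therefore the two conditions $I = Rad(I)$ and $I = \sqrt{I}$ are literally the same equation, and the equivalence is trivial. I would simply write one line invoking the theorem, rewriting the definition of ``radical ideal'' in terms of $\sqrt{\,\cdot\,}$ by substitution. Since $I \subset \sqrt{I}$ always holds (as observed after Definition \ref{I-infinitesimals}), one could even phrase the condition as $\sqrt{I} \subset I$, but this adds nothing beyond the direct substitution.

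There is no obstacle to speak of; the content was entirely absorbed into the proof of Theorem \ref{nulle}, and this corollary functions as a reformulation that makes the algebraic-geometric analogy visible (radical ideals = ideals stable under passage to infinitesimals in the quotient). The only stylistic choice is whether to state the proof as ``Immediate from Theorem \ref{nulle}'' or to spell out the one-line chain $I = Rad(I) \iff I = \sqrt{I}$.
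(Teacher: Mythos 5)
Your proposal is correct and matches the paper's intent exactly: the corollary is stated without proof precisely because it is the immediate substitution of $Rad(I) = \sqrt{I}$ (Theorem \ref{nulle}) into the definition $I = Rad(I)$ of a radical ideal. Nothing further is needed.
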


\section{The relations between some classes of MV-algebras}

In this section we prove (except for Proposition \ref{primequasihyper}) in a syntactic elementary way, meaning first order with denumerable disjunctions, several implications (some inedited) between elementary classes of MV-algebras which in the literature are usually proved in a set theoretical semantical way.   
In the following the variables  $x, \, y, \ldots \,$ are assumed to range on some \mbox{MV-algebra $A$.}

In view of the characterisation \ref{basic} of maximal ideals we set:
\begin{definition}
An ideal $I \subset A$ of a MV-algebra $A$ is \emph{quasimaximal} 
$\iff \;\; \forall x \in A \; (x \notin I \; \iff  
\exists \, n \geq 1\; | \; \neg nx \; \in \sqrt{I})$
\end{definition}

For a MV-algebra $A$, the ideals $I$ such that the quotient algebra $A/I$ is hyperarchimedean will be called \emph{hyperradical}. Thus:
\begin{definition}
An ideal $I \subset A$ of a MV-algebra $A$ is \emph{hyperradical} if for any $x \in A$, there exists en integer $n \geq 1$ such that 
$(n+1)x \ominus nx  \in I$.
\end{definition}


For a MV-algebra $A$, the ideals $I$ such that the quotient algebra $A/I$ is quasihyperarchimedean will be called \emph{quasihyperradical}. Thus:
\begin{definition}
An ideal $I \subset A$ of a MV-algebra $A$ is \emph{quasihyperradical} if for any $x \in A$, there exists an integer $n \geq 1$ such that 
$(n+1)x \ominus nx  \in \sqrt{I}$.
\end{definition}
\begin{remark} \label{quasihyper=sqrthyper}
Clearly an ideal $I$ is quasihyperradical if and only is $\sqrt{I}$ is hyperradical.
\end{remark}

%

This illustrates a correspondence between classes of MV-algebras and notions of ideals. We have the following table:
$$
\xymatrix@R=0.1ex
  {
   simple                  \ar@{-}[r]  &   maximal            \\
   quasisimple             \ar@{-}[r]  &   quasimaximal       \\
   semisimple              \ar@{-}[r]  &   radical            \\
   chain                   \ar@{-}[r]  &   prime              \\
   hyperarchimedean        \ar@{-}[r]  &   hyperradical       \\
   quasihyperarchimedean   \ar@{-}[r]  &   quasihyperradical  
  }
$$
The next proposition is clear:
\begin{proposition} \label{quasihipersemihiper}
An ideal is hyperradical if and only if it is quasihyperradical and radical (that is, an MV-algebra is semisimple quasihyperarchimedean if and only if it is hyperarchimedean) \cqd
\end{proposition}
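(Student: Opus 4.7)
The plan is to translate everything to the quotient $A/I$ via the definitions and then invoke the one nontrivial fact about archimedean infinitesimals that the paper has already flagged. Recall the dictionary: $I$ is hyperradical iff $A/I$ is hyperarchimedean (given in the paper), $I$ is quasihyperradical iff $A/I$ is quasihyperarchimedean, and, by the corollary to Theorem \ref{nulle}, $I$ is radical iff $I = \sqrt{I}$, equivalently $A/I$ has no nonzero infinitesimals. So the proposition reduces to the algebra-level statement that an MV-algebra is hyperarchimedean iff it is simultaneously quasihyperarchimedean and semisimple.

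For the ($\Leftarrow$) direction, assume $I$ is both quasihyperradical and radical. Given any $x \in A$, the quasihyperradical hypothesis furnishes an integer $n \geq 1$ with $(n+1)x \ominus nx \in \sqrt{I}$, and the equality $\sqrt{I} = I$ then yields $(n+1)x \ominus nx \in I$. Hence $I$ is hyperradical. This half is completely formal from the definitions.

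For the ($\Rightarrow$) direction, assume $I$ is hyperradical. Quasihyperradicality is immediate from the inclusion $I \subseteq \sqrt{I}$ (noted right after Definition \ref{I-infinitesimals}). To show $I$ is radical it suffices to check $\sqrt{I} \subseteq I$. Let $a \in \sqrt{I}$; by Definition \ref{I-infinitesimals} the class $\rho(a) \in A/I$ is infinitesimal. Applying hyperradicality to $a$ produces an $n$ with $(n+1)a \ominus na \in I$, i.e.\ $(n+1)\rho(a) \ominus n\rho(a) = 0$ in $A/I$, so $\rho(a)$ is archimedean. The observation following the definition of archimedean (that the only archimedean infinitesimal is $0$, which is how Remark \ref{arqinf=0} is used) then forces $\rho(a) = 0$, i.e.\ $a \in I$.

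There is no real obstacle here; the proof is essentially a chase through the definitions, and the only substantive input is the elementary fact that an element which is simultaneously archimedean and infinitesimal must be $0$, already explicitly recorded in Section \ref{quasihyperarchimedean}.
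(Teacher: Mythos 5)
Your proof is correct. The backward direction and the quasihyperradicality half of the forward direction are, as you say, purely formal from the definitions together with $I \subseteq \sqrt{I}$ and the corollary $I$ radical $\iff I = \sqrt{I}$. The one substantive point is your claim that hyperradical implies radical. The paper does not prove this inside Proposition \ref{quasihipersemihiper} (which it simply declares clear); it isolates exactly that implication as the immediately following proposition and proves it there by a syntactic computation: from $x$ being $I$-infinitesimal one derives $d((n+1)x \ominus nx,\, x) \in I$ via the identity $\neg(\neg x \vee nx) = (n+1)x \ominus nx$, and then $(n+1)x \ominus nx \in I$ forces $x \in I$. Your argument is the quotient-algebra rendering of the same idea: pass to $A/I$, observe that hyperradicality makes $\rho(a)$ archimedean while $a \in \sqrt{I}$ makes it infinitesimal, and invoke the observation from Section \ref{quasihyperarchimedean} (via Remark \ref{arqinf=0}) that the only archimedean infinitesimal is $0$. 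Both routes are valid and rest on the same elementary fact; yours is shorter and more transparent, while the paper's version is phrased to stay within the ``first order with denumerable disjunctions'' syntactic style that Section 4 explicitly advertises, avoiding the detour through the quotient. The only presentational caveat is that, in the paper's ordering, your self-contained proof front-loads the content of the next proposition rather than citing it.
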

\begin{proposition}
Hyperradical ideals are radical ideals (that is, hyperarchimedean algebras are semisimple)
\end{proposition}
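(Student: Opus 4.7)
The plan is to reduce the statement to the already-noted observation that \emph{the only archimedean infinitesimal is $0$}, passed to the quotient $A/I$, and then use Theorem \ref{nulle} to identify radicality with $\sqrt{I} = I$.

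More concretely, first I would unpack the two notions in terms of the quotient $\rho: A \to A/I$. By Definition \ref{I-infinitesimals}, $a \in \sqrt{I}$ if and only if $\rho(a)$ is infinitesimal in $A/I$. Analogously, $I$ being hyperradical says exactly that every element of $A/I$ is archimedean, i.e.\ that $A/I$ is hyperarchimedean.

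Now take $a \in \sqrt{I}$. Then $\rho(a)$ is both infinitesimal and (by hyperradicality of $I$) archimedean in $A/I$. Since the only archimedean infinitesimal in any MV-algebra is $0$ (the italicised remark following the definition of archimedean in section \ref{quasihyperarchimedean}), we conclude $\rho(a) = 0$, i.e.\ $a \in I$. Combined with the trivial inclusion $I \subseteq \sqrt{I}$ noted right after Definition \ref{I-infinitesimals}, this gives $\sqrt{I} = I$. Finally, Theorem \ref{nulle} yields $Rad(I) = \sqrt{I} = I$, so $I$ is radical, which is what we wanted.

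There is no real obstacle here: everything needed has just been set up. The only thing to watch is the correct quotient translation of the two defining conditions, so that the sentence ``the only archimedean infinitesimal is $0$'' can be applied to $\rho(a) \in A/I$ rather than in $A$ itself.
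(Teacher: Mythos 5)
Your proof is correct: every step is covered by material already established in the paper, and the final reduction from $\sqrt{I}=I$ to $Rad(I)=I$ via Theorem \ref{nulle} is legitimate and non-circular, since that theorem is proved in Section \ref{infandradical} independently of this proposition. The route is, however, not the one the paper takes. The declared aim of this section is to give \emph{syntactic} first-order proofs in place of the semantical ones found in the literature, so the paper does not pass to the quotient: it works inside $A$, using the equivalence $d(x\vee y,\,x)\in I \iff y\ominus x\in I$ together with the identity $\neg(\neg x\vee nx)=(n+1)x\ominus nx$ to show that for an $I$-infinitesimal $x$ one has $d((n+1)x\ominus nx,\,x)\in I$ for every $n$, whence hyperradicality forces $x\in I$. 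That computation is precisely the proof of your key fact --- \emph{the only archimedean infinitesimal is $0$} --- transported along the congruence determined by $I$; your version instead translates both hypotheses into $A/I$ and cites that fact as a black box (the italicised remark in Section \ref{quasihyperarchimedean}, resting on Remark \ref{arqinf=0}, whose proof is only referenced, not given). Your approach buys brevity and makes the conceptual content transparent; the paper's buys an explicit elementary derivation in line with the section's programme. Both arguments conclude by the same appeal to the identification of radical ideals with ideals satisfying $I=\sqrt{I}$, i.e.\ the corollary of Theorem \ref{nulle}.
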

\begin{proof}
 The reader can easily check that the following holds for any ideal $I$:

\vs{-1ex}

\begin{center}
$
\; d(x \vee y, \, x) \in I  \; \iff \; y \ominus x \in I.
$
\end{center}

\vs{-1ex}

Assuming $x$ to be \mbox{$I$-infinitesimal,} it follows that for any integer 
$n \geq 1$, 
$d(\neg x \vee nx, \, \neg x) \in I$. Equivalently, 
$d(\neg(\neg x \vee nx), \, x) \in I$. But: 
$$
\neg(\neg x \vee nx) = x \wedge \neg nx = 
\neg nx \odot (nx \oplus x) = (n+1)x \ominus nx.
$$ 
Thus, 
$d((n+1)x \ominus nx , \, x)  \in I$, Take $n \geq 1$ such that $(n+1)x \ominus nx \in I$, it follows that $x \in I$, proving that $I$ is a radical ideal (compare this proof with the remark after \cite[definition 3.6.3]{COM}).
\end{proof}

\begin{proposition} \label{maximalhyper}
Maximal ideals are hyperradical ideals (that is, simple algebras are hyperarchimedean).
\end{proposition}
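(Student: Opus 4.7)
The plan is to use the first-order characterisation \eqref{basic} of maximality directly, in keeping with the syntactic style announced at the start of the section. Let $I$ be a maximal ideal of $A$ and fix an arbitrary $x \in A$. I want to produce an integer $n \geq 1$ with $(n+1)x \ominus nx \in I$; the dichotomy in \eqref{basic} splits the argument into two natural cases.

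\medskip

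In the first case, suppose $x \in I$. Any $n \geq 1$ will work, because in any MV-algebra one has the identity $(a \oplus b) \ominus b \leq a$; applied with $a = x$ and $b = nx$ this gives $(n+1)x \ominus nx = (nx \oplus x)\ominus nx \leq x$, hence $(n+1)x \ominus nx \in I$. In the second case, $x \notin I$, and \eqref{basic} furnishes $n \geq 1$ with $\neg nx \in I$. Rewriting $(n+1)x \ominus nx = (nx \oplus x) \odot \neg(nx)$ and using that $a \odot b \leq b$, one obtains $(n+1)x \ominus nx \leq \neg(nx) \in I$, so again $(n+1)x \ominus nx \in I$. Either way, such an $n$ exists, which is exactly the definition of hyperradicality.

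\medskip

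There is no real obstacle here: once one notices that \eqref{basic} already outputs the integer $n$ that hyperradicality requires (up to taking $n=1$ in the trivial case $x \in I$), the rest is just the two elementary MV-algebra inequalities $(a \oplus b) \ominus b \leq a$ and $a \odot b \leq b$. The only thing to be careful about is to stay syntactic, i.e.\ to resist the temptation of the semantic one-line proof ``$A/I$ embeds in $[0,1]$, which is archimedean''; the whole point of the section is to avoid such semantic appeals.
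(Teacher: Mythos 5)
Your proof is correct and follows the same strategy as the paper's: split on the dichotomy of \eqref{basic}, handle $x \in I$ by a trivial bound, and in the case $x \notin I$ use the $n$ with $\neg nx \in I$. Your second case is in fact slightly more direct than the paper's, which first deduces $\neg(n+1)x \in I$ and then bounds $(n+1)x \ominus nx$ by the distance $d(nx,(n+1)x) = d(\neg nx, \neg(n+1)x)$, whereas you simply observe $(n+1)x \ominus nx = (n+1)x \odot \neg nx \leq \neg nx \in I$.
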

\begin{proof}
If $x \in I$, clearly $2x \ominus x \leq 2x \in I$. Assume $x \notin I$, and by \ref{basic} take an integer $n \geq 1$ such that $\neg nx \in I$. $nx \leq (n+1)x$, so also  $\neg (n+1)x \in I$. Then, 
$(n+1)x \ominus nx \leq d(nx,\, (n+1)x) = d(\neg nx,\, \neg (n+1)x) \in I$.
\end{proof}
\begin{proposition} \label{quasimaximalhyper}
Quasimaximal ideals are quasihyperradical ideals (that is, quasisimple algebras are quasihyperarchimedean).
\end{proposition}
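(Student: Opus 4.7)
The proof should mimic the preceding proposition (\ref{maximalhyper}) almost verbatim, with $\sqrt{I}$ playing the role that $I$ played there. The single new ingredient needed is the fact, established by the corollary to Theorem \ref{nulle}, that $\sqrt{I}$ is itself an ideal; without that, neither downward closure under $\leq$ nor closure under $\oplus$ of $\sqrt{I}$ could be invoked.

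The plan is to split on whether $x \in I$. If $x \in I$, then taking $n=1$ suffices, because $2x \ominus x \leq 2x \in I \subset \sqrt{I}$. If instead $x \notin I$, then by the very definition of quasimaximal one gets an integer $n \geq 1$ with $\neg nx \in \sqrt{I}$. Since $nx \leq (n+1)x$ implies $\neg (n+1)x \leq \neg nx$, the fact that $\sqrt{I}$ is an ideal (hence downward closed) gives $\neg (n+1)x \in \sqrt{I}$ as well.

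At this point one repeats the final line of the proof of Proposition \ref{maximalhyper}: the general inequality
\[
(n+1)x \ominus nx \;\leq\; d(nx,\, (n+1)x) \;=\; d(\neg nx,\, \neg (n+1)x),
\]
together with closure of $\sqrt{I}$ under $\oplus$ (since $d(u,v) = (u \ominus v) \oplus (v \ominus u)$ and both summands lie in $\sqrt{I}$ when $u, v$ do), yields $(n+1)x \ominus nx \in \sqrt{I}$, which is precisely the quasihyperradical condition for this $x$.

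The main (and only real) obstacle is simply remembering to cite the corollary that $\sqrt{I}$ is an ideal; once that is in hand, the argument is a syntactic transcription of the $I \rightsquigarrow \sqrt{I}$ analogue of Proposition \ref{maximalhyper}, and no additional MV-algebraic identities beyond those already used there are required.
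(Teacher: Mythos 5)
Your proposal is correct and is exactly what the paper intends: its own proof of this proposition just says that the proof of Proposition \ref{maximalhyper} carries over with $\sqrt{I}$ in place of $I$, which is the transcription you give. Your added remark that one must know $\sqrt{I}$ is an ideal (the corollary to Theorem \ref{nulle}) in order to use downward closure and closure under $\oplus$ is a worthwhile point that the paper leaves implicit.
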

\begin{proof}
The reader can check that the same proof in the previous proposition applies here.
\end{proof}
\begin{proposition} \label{long}
Prime hyperradical ideals are maximal ideals (that is, hyperarchimedean chains are simple algebras).
\end{proposition}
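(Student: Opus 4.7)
The plan is to reduce the maximality condition to the hyperradical condition by means of the characterisation \ref{basic}: to show $I$ is maximal it suffices to check that for every $x \notin I$ there exists $n \geq 1$ with $\neg nx \in I$. So I would start by fixing an arbitrary $x \notin I$ and applying the hyperradical hypothesis to obtain an integer $n \geq 1$ such that $(n+1)x \ominus nx \in I$.

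The next step is to invoke the identity already worked out in the proof of the preceding proposition, namely
\[
(n+1)x \ominus nx \;=\; \neg(\neg x \vee nx) \;=\; x \wedge \neg nx.
\]
So I have $x \wedge \neg nx \in I$. Now the primality of $I$ kicks in: a standard characterisation of prime ideals in MV-algebras is that $y \wedge z \in I$ forces $y \in I$ or $z \in I$. Since $x \notin I$ by assumption, it must be that $\neg nx \in I$, which by \ref{basic} is exactly the condition needed to conclude that $I$ is maximal.

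I do not expect any serious obstacle. The only subtle point is making sure the integer $n$ produced by hyperradicality can be taken $\geq 1$ (if the condition only gives $n \geq 0$, the case $n = 0$ yields $x \ominus 0 = x \in I$, contradicting $x \notin I$, so one can always upgrade to $n \geq 1$); and invoking the identification of $(n+1)x \ominus nx$ with $x \wedge \neg nx$, which is already in the paper and hence needs no fresh computation. Everything else is immediate from the definitions, keeping the proof in the promised first-order syntactic style.
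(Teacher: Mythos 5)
Your proof is correct, but it takes a genuinely different and noticeably shorter route than the paper's. The paper also starts from $(n+1)x\ominus nx\in I$, but then runs a contradiction argument to show $(n+1)x\odot nx\notin I$, invokes primality in the form ``$a\ominus b\in I$ or $b\ominus a\in I$'' to get $\neg nx\ominus(n+1)x\in I$, and computes that this element equals $\neg(2n+1)x$, so it exits with $m=2n+1$. You instead use the identity $(n+1)x\ominus nx=x\wedge\neg nx$ (already established in the proof of the preceding proposition) together with the lattice-theoretic form of primality, $y\wedge z\in I\Rightarrow y\in I$ or $z\in I$, to land directly on $\neg nx\in I$ with the same $n$. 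This is cleaner and stays within the first-order syntactic style of the section. The one point you should make explicit is that the paper's working notion of prime ideal is the $\ominus$-dichotomy, not the meet condition; the two are equivalent for proper ideals, and the direction you need follows equationally from the identity $y=(y\wedge z)\oplus(y\ominus z)$ (if $y\wedge z\in I$ and, say, $y\ominus z\in I$, then $y\in I$), so a one-line justification or a citation keeps the argument self-contained. Also note that, as in the paper, only the forward direction of the characterisation (\ref{basic}) needs checking, since for a proper ideal the reverse implication is automatic.
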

\begin{proof}
The reader can easily check that the following holds for any ideal $I$: 

\vs{-1ex}

\begin{center}
 $(a)$ $(x \ominus y \in I, \; y \in I \rimply x \in I)$.
\end{center}

Let $I$ be a prime hyperradical ideal, justified by \ref{basic}, it is enough to prove that if $x \notin I$, then there exist an integer $m \geq 1$ such that $\neg mx \in I$. 

Take $n$ such that $(n+1)x \odot \neg nx = (n+1)x \ominus nx \in I$. Assume (absurdum hypothesis) that $(n+1)x \odot nx \in I$. By distributivity of $\odot$ over $\vee$ it follows 
$$
(n+1)x \ominus \neg (\neg nx \vee nx) = (n+1)x \odot (\neg nx \vee nx) \in I.
$$
But
$
\neg (\neg nx \vee nx) = nx \wedge \neg nx = \neg nx \odot (nx \oplus nx) = 2nx \ominus nx.
$ 
Then, by (\ref{nimpliesmk})  $\;2nx \ominus nx \in I$. It follows by (a) above that $(n+1)x \in I$, which implies $x \in I$, contrary with our primary assumption. Thus we have 
$(n+1)x \ominus \neg nx = (n+1)x \odot nx \notin I$. Since $I$ is prime, it follows that $\neg nx \ominus (n+1)x \in I$. Finally: 
$$
\neg nx \ominus (n+1)x  = \neg nx \odot \neg (n+1)x = \neg( nx \oplus (n+1)x) = \neg (2n + 1)x.
$$
 Thus, $\neg mx \in I$ for $m = 2n + 1$.
\end{proof}
\begin{comment}   
In order to develop an elementary proof of the next two propositions it would be necessary to prove in the style of propositions \ref{quasihipersemihiper} to \ref{long} that if $I$ is a prime ideal, then $\sqrt{I}$ is maximal. 
\end{comment}
\begin{proposition} \label{primequasihyper}
Prime ideals are quasihyperradical ideals (that is, chains are quasihyperarquimedean algebras).
\end{proposition}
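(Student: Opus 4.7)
The plan is to leverage the chain Remark \ref{radicalprime} $\Rightarrow$ Theorem \ref{nulle} $\Rightarrow$ Proposition \ref{maximalhyper} $\Rightarrow$ Remark \ref{quasihyper=sqrthyper}, reducing the statement to facts already proved. Specifically, I would argue that if $I$ is prime, then $\sqrt{I}$ is a maximal ideal, and then apply the already-established fact that maximal ideals are hyperradical, together with the equivalence ``$I$ quasihyperradical $\iff$ $\sqrt{I}$ hyperradical''.

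In more detail, the first step is: since $I$ is prime, Remark \ref{radicalprime} says that $Rad(I)$ is a maximal ideal. Next, by Theorem \ref{nulle} we have $\sqrt{I}=Rad(I)$, hence $\sqrt{I}$ is a maximal ideal. Then Proposition \ref{maximalhyper} yields that $\sqrt{I}$ is hyperradical. Finally, Remark \ref{quasihyper=sqrthyper} gives that $I$ itself is quasihyperradical, which is what we wanted.

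The main obstacle, as the authors themselves flag in the Comment immediately preceding the statement, is the lack of a purely syntactic (first-order with denumerable disjunctions) proof that $\sqrt{I}$ is maximal when $I$ is prime. Bypassing this obstacle is exactly what the Nullstellensatz machinery of Section \ref{infandradical} accomplishes: the identification $\sqrt{I}=Rad(I)$ is obtained there via the geometric route (through $J$ and $V$ on $[0,1]^N$), not by a direct elementary argument. Once that semantic identity is accepted, the proposition follows from the already-proved elementary Proposition \ref{maximalhyper} by a two-line chase, so no new computations on the MV-algebraic operations are needed here.
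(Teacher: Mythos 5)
Your proof is correct and is essentially identical to the paper's: the authors likewise combine Remark \ref{radicalprime} and Theorem \ref{nulle} to conclude that $\sqrt{I}$ is maximal for prime $I$, then apply Proposition \ref{maximalhyper} and Remark \ref{quasihyper=sqrthyper}. Your observation that this route is semantic rather than syntactic matches the authors' own caveat in the preceding Comment.
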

\begin{proof}
By Remark \ref{radicalprime} and Theorem \ref{nulle} it follows that if $I$ is a prime ideal, $\sqrt{I}$ is maximal, thus by \ref{maximalhyper} it is hyperradical. Then, Remark \ref{quasihyper=sqrthyper} finishes the proof.
\end{proof}

\begin{proposition}
Prime radical ideals are maximal ideals (that is, semisimple chains are simple algebras).
\end{proposition}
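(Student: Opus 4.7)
The plan is to chain together two results already established. Recall Definition \ref{radicalideals}: an ideal $I$ is \emph{radical} precisely when $I = Rad(I)$, where $Rad(I)$ is the intersection of all maximal ideals containing $I$. And Remark \ref{radicalprime} states that if $I$ is prime, then it is contained in a unique maximal ideal, so $Rad(I)$ is itself a maximal ideal.

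Combining these, if $I$ is both prime and radical, then $I = Rad(I)$ and the right-hand side is maximal, so $I$ is maximal. That is essentially the entire argument.

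As an alternative, one can stay within the framework developed in this section and give a derivation through the dual entries of the table: by Proposition \ref{primequasihyper} the prime ideal $I$ is quasihyperradical; by hypothesis $I$ is also radical; Proposition \ref{quasihipersemihiper} then yields that $I$ is hyperradical; and finally Proposition \ref{long} says that prime hyperradical ideals are maximal. I would present the first route as the main proof since it is a one-line observation, and optionally record the second as a parallel derivation that mirrors Proposition \ref{primequasihyper}.

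There is no genuine obstacle. The only subtlety worth flagging is that Remark \ref{radicalprime} (and the identification it uses) ultimately leans on the non-elementary equality $Rad(I) = \sqrt{I}$ from Theorem \ref{nulle}, which is exactly the asymmetry highlighted in the Comment preceding Proposition \ref{primequasihyper}. So both routes share the same non-elementary ingredient.
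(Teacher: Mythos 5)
Your proposal is correct, and your second ("alternative") route is exactly the paper's proof: prime $\Rightarrow$ quasihyperradical by Proposition \ref{primequasihyper}, quasihyperradical $+$ radical $\Rightarrow$ hyperradical by Proposition \ref{quasihipersemihiper}, and prime hyperradical $\Rightarrow$ maximal by Proposition \ref{long}. Your primary route, however, is genuinely different and shorter: since a prime ideal is contained in a unique maximal ideal, $Rad(I)$ is maximal (Remark \ref{radicalprime}), and $I$ radical means $I = Rad(I)$ by Definition \ref{radicalideals}, so $I$ is maximal. This is a correct one-line argument that bypasses the entire chain of ideal-theoretic propositions. What the paper's longer route buys is coherence with the section's program: it exhibits the statement as a formal consequence of the table of correspondences between classes of algebras and classes of ideals, isolating all the non-elementary content in the single Proposition \ref{primequasihyper}. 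One small correction to your closing remark: your direct route does not actually invoke the equality $Rad(I)=\sqrt{I}$ of Theorem \ref{nulle}; the non-elementary ingredient it shares with the paper's route is the semantical fact that a prime ideal lies below a unique maximal ideal (the content of Remark \ref{radicalprime}), which is also what Theorem \ref{nulle} is used for inside Proposition \ref{primequasihyper}.
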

\begin{proof}
By proposition \ref{primequasihyper} the ideal is  quasihyperradical and radical, thus by \ref{quasihipersemihiper} it is hyperradical. The proof finishes by proposition \ref{long}.
\end{proof}

\end{document}